\newcommand{\C}{\mathbb{C}}
\newcommand{\N}{\mathbb{N}}
\newcommand{\R}{\mathbb{R}}
\newcommand{\rar}{\rightarrow}
\DeclareMathOperator{\rea}{Re}
\newcommand{\T}{\mathbb{T}}
\newcommand{\Z}{\mathbb{Z}}
\DeclareMathOperator{\spec}{spec}
\DeclareMathOperator{\rank}{rank}
\newtheorem{theorem}{Theorem}[section]
\newtheorem{corr}[theorem]{Corollary}
\newtheorem{prop}[theorem]{Proposition}
\theoremstyle{definition}
\newtheorem{ex}[theorem]{Example}
\newtheorem{df}[theorem]{Definition}
\theoremstyle{remark}
\newtheorem{rem}[theorem]{Remark}
\begin{document}

\begin{frontmatter}

\title{Stability and stabilization of linear positive systems\\
on time scales}

\author{Zbigniew~Bartosiewicz}

\address{Bialystok University of Technology, Faculty of Computer Science\\ Wiejska 45A, 15-351 Bialystok, Poland}

\begin{abstract}
It is shown that a positive linear system on a time scale with a bounded graininess is uniformly exponentially stable if and only if the characteristic polynomial of the matrix defining the system has all its coefficients positive. Then this fact is used to find necessary and sufficient conditions of positive stabilizability of a positive control system on a time scale.
\end{abstract}

\begin{keyword}
positive linear system; positive stability; positive stabilization.
\end{keyword}

\end{frontmatter}

\section{Introduction}
In positive systems the state variables take nonnegative values. Such systems appear in biology, medicine and economics \cite{FR,Ka}. We study here linear positive systems on time scales. A time scale is a model of time. Time may be continuous, discrete or mixed -- partly continuous and partly discrete. Delta derivative, which is used in delta differential equations that model the positive systems on time scales, may be equal to ordinary derivative or may be equal to a difference quotient, depending on the time scale and a particular point (see Appendix for the precise definitions).

It is known that in the continuous-time case the positive system $\dot{x}=Ax$ is exponentially stable if and only if the characteristic polynomial of the matrix $A$ has all its coefficients positive (see \cite{FR,Ka}). We show that this is true for a system on an arbitrary time scale as long as the graininess function of the time scale (which measures the distance between a particular time instant and the next instant) is bounded and uniform exponential stability is considered. Thus any discretization (in particular nonuniform) of a positive continuous-time uniformly exponentially stable system that preserves positivity gives an uniformly exponentially stable system. We rely here on \cite{DoaKalSie} and \cite{DoKaSiWi}, where uniform exponential stability of linear systems was studied. Stability of linear systems on time scales was also considered in \cite{Dav,Dac,PetRaf,Liu,DuTien,BarPioWyr,Dav1,Dav2,PoeSigWir,Ma}. Various authors considered different concepts of stability and different variants of the same concept. For example, in the definition of exponential stability, to estimate the solutions, either the standard exponential function was used or the exponential function on the time scale.

If a control system is not uniformly exponentially stable, we can try to use feedback to stabilize the system. If our system is positive, it is natural to require that the feedback preserves positivity. This procedure is called positive stabilization. We give necessary and sufficient conditions for positive stabilizability of positive systems on time scales with bounded graininess. One of the conditions is equivalent to standard stabilizability of a system on a time scale. The other consists of inequalities that must be satisfied by the matrix that defines the feedback. 

Stabilizability of positive linear systems were investigated in e.g. \cite{FR,Ka}, separately for continuous-time and discrete-time systems. Other problems for positive linear systems on time scales like reachability, observability and realizations were studied in \cite{B1,B2,B6,B8,B5,B10}.

\section{Preliminaries}
We shall study control systems on time scales.
A short introduction to the calculus on time scales is provided in Appendix. More information can be found in e.g. \cite{BohPet}.

Let $\T$ be a time scale with the forward graininess $\mu_\T$. We shall assume that $\sup\mathbb{T}=+\infty$ and denote $\mu_\T:=\sup \{\mu_\T(t): t\in\T\}$. 

 We shall  write $e_q(t,t_0)$ to denote the generalized exponential function of $q$, initialized at $t_0$ and evaluated at $t$. It is the unique forward solution ($t\geq t_0$) of the initial value problem
\[    x^\Delta(t)=q(t)x(t),\; x(t_0)=1, \]
where $x^\Delta(t)$ means the delta derivative of the function $x$ at time $t\in\T$.

For a square matrix $A$ with
complex or real elements, $\spec(A)$ will mean its spectrum,
i.e. the set of its eigenvalues and $\|A\|$ will mean the standard spectral
norm of $A$.

Let $A$ be a real ${n\times n}$ matrix. Consider the time-invariant linear system on the time scale $\mathbb{T}$
\begin{equation} \label{uk_l}
x^\Delta  (t)=Ax(t),
\end{equation}
where $x(t)\in \mathbb{R}^n$ and  $t\in\mathbb{T}$.
\begin{theorem} [\citep{BohPet}]
Let $t_0 \in \mathbb{T}$ and $x_0 \in \mathbb{R}^n$. Then
system (\ref{uk_l}) with the initial condition $x(t_0)=x_0$ has a
unique solution $x: [t_0,+\infty)\cap\mathbb{T} \rightarrow
\mathbb{R}^n$.
\end{theorem}
This result can be extended to matrix-valued
solutions of \eqref{uk_l}, which leads to the following
definition.

 Let $t_0\in\mathbb{T}$. A function
\(X: [t_0,+\infty)\cap\mathbb{T} \rightarrow \mathbb{R}^{n\times n}\) that
satisfies the matrix delta differential equation
\begin{eqnarray}\label{ew1}
\mbox{$X^{\Delta}(t)=A X(t)$}
\end{eqnarray}
and the initial condition $X(t_0)=I$, where $I$ is the $n\times n$
identity matrix, is called \emph{the matrix exponential function}
(corresponding to $A$) \emph{initialized at $t_0$}. Its value at
$t\in\mathbb{T}$, $t\geq t_0$, is denoted by $e_A(t,t_0)$. Then the solution of the initial value problem
\[ x^\Delta=Ax,\  x(t_0)=x_0 \]
can be written as
\[ x(t)=e_A(t,t_0)x_0. \]

All the definitions and statements of this section can be naturally extended to complex-valued functions and matrices.

\section{Positive control systems on time scales}

Let $\R^n_+$ mean the set of all vectors from $\R^n$ with nonnegative components.

Consider now a linear control system on a time scale $\T$:

\begin{equation} \label{contsys}
x^\Delta(t)=Ax(t)+Bu(t),
\end{equation}
where $t\in\T$, $x(t)\in\R^n$ and $u(t)\in\R^m$. We assume that the control $u$ is a piecewise continuous function of time.

\begin{df}\label{d1}
The system \eqref{contsys} is \emph{positive} if for all $t_0\in\mathbb{T}$ and any control $u:[t_0,t_1]_\T\rar\R^m_+$ the trajectory starting from any $x(t_0)=x_0\in\mathbb{R}^n_+$ and corresponding to $u$ stays in $\mathbb{R}^n_+$ for all $t\in[t_0,t_1]_\T$.
\end{df}

We extend $\R$ adding $+\infty$: $\bar{\R}:=\R\cup\{+\infty\}$. For $a\in\R$ we set  $a+\infty:=+\infty$, $1/0=+\infty$, $1/+\infty=0$ and $+\infty> a$.

Let $\gamma:=\gamma(\T):={1}/{\bar{\mu}_\T}$. Then $\gamma(\R)=+\infty$, $\gamma(h\Z)=1/h$ and $\gamma(2^{\N})=0$.

Let $A_\T:=A+\gamma(\T)I$, where $\gamma(\T)I$ is the diagonal matrix with $\gamma(\T)$ on the diagonal.

\begin{prop}[\cite{B2,DoKaSiWi}]\label{ppos}
  The system $x^\Delta=Ax+Bu$ is positive if and only if all elements of $A_\T$ and $B$ are nonnegative.
\end{prop}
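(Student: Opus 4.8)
The plan is to reduce the positivity of the control system on $\T$ to a statement about the sign of the entries of $A_\T$ and $B$ by analyzing the delta differential equation locally, point by point along the time scale. The key observation is that the delta derivative behaves quite differently at right-dense and right-scattered points, so I would split the argument according to the graininess $\mu_\T(t)$ at each $t$.

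First I would recall that at a right-scattered point $t$ with $\mu_\T(t)=:\mu>0$ the delta derivative satisfies $x^\Delta(t)=\frac{x(\sigma(t))-x(t)}{\mu}$, so the equation \eqref{contsys} becomes the explicit recursion $x(\sigma(t))=(I+\mu A)x(t)+\mu Bu(t)$. For the trajectory to stay in $\R^n_+$ for every $x(t)\in\R^n_+$ and every $u(t)\in\R^m_+$, it is necessary and sufficient that the matrices $I+\mu A$ and $\mu B$ have all entries nonnegative; testing on standard basis vectors gives necessity, and the closure of $\R^n_+$ under nonnegative combinations gives sufficiency. Since $\mu\le\bar\mu_\T$, the condition $I+\mu A\ge 0$ entrywise for the actual graininess values that occur is implied by $I+\bar\mu_\T A\ge 0$, i.e. by $A_\T\ge 0$ entrywise (using $\gamma(\T)=1/\bar\mu_\T$ and multiplying through by $\bar\mu_\T$); conversely one uses that the relevant graininess values accumulate up to the supremum. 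At a right-dense point $t$ (where $\mu_\T(t)=0$) the delta derivative is the ordinary derivative, and the standard continuous-time argument applies: the flow of $\dot x=Ax+Bu$ preserves $\R^n_+$ for all nonnegative inputs if and only if $A$ is Metzler (off-diagonal entries nonnegative) and $B\ge 0$; and being Metzler is exactly the condition $A+cI\ge 0$ for $c$ large enough, which is weaker than $A_\T\ge 0$ when $\bar\mu_\T<\infty$. When $\bar\mu_\T=+\infty$ we have $\gamma(\T)=0$, so $A_\T=A$, and the condition $A_\T\ge 0$ must be read together with the convention that recovers exactly the Metzler condition in the limit — here I would invoke the cited references \cite{B2,DoKaSiWi} for the bookkeeping, or simply note that for such time scales the right-scattered analysis with unbounded $\mu$ forces $A$ itself to be nonnegative off the diagonal in the appropriate limiting sense.

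Assembling the pieces: positivity of \eqref{contsys} is equivalent to the conjunction of the local conditions over all $t\in\T$, and since the binding constraint among $\{I+\mu_\T(t)A\ge 0\}$ is the one with the largest graininess (together with the Metzler condition coming from right-dense points, which it subsumes when $\bar\mu_\T<\infty$), the whole family collapses to the single condition $A_\T\ge 0$ entrywise, while the input side collapses to $B\ge 0$ entrywise independently.

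The main obstacle I anticipate is handling the supremum $\bar\mu_\T$ cleanly when it is attained only as a limit rather than as an actual value of $\mu_\T(t)$, and uniformly covering the degenerate cases $\bar\mu_\T\in\{0,+\infty\}$ within one formula; the extended-arithmetic conventions introduced just before the proposition are designed precisely to absorb this, so the careful step is to check that the stated inequality $A_\T\ge 0$ really is equivalent to "$I+\mu_\T(t)A\ge 0$ for all $t$ and $A$ Metzler", using those conventions, rather than merely sufficient.
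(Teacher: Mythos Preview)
The paper does not prove Proposition~\ref{ppos}; it is quoted from \cite{B2,DoKaSiWi} without argument, so there is nothing in the paper to compare your proposal against.

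On its own merits your outline is the standard one and is essentially correct: split according to whether $t$ is right-scattered or right-dense, reduce the right-scattered case to entrywise nonnegativity of $I+\mu_\T(t)A$ and $B$, reduce the right-dense case to the Metzler condition on $A$ together with $B\ge 0$, and then observe that the diagonal constraint $1+\mu a_{ii}\ge 0$ is monotone in $\mu$, so the binding constraint occurs at $\mu=\bar\mu_\T$ (either attained, or obtained in the limit since the inequalities are non-strict). The passage from the local step invariance to global forward invariance of $\R^n_+$ is routine via the semigroup property of $e_A(\cdot,\cdot)$, but you should say a word about it rather than leave it implicit.

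One genuine correction: in the case $\bar\mu_\T=+\infty$ you say the condition $A_\T\ge 0$ ``recovers exactly the Metzler condition''. It does not, and it should not. With $\gamma(\T)=0$ one has $A_\T=A$, so $A_\T\ge 0$ means \emph{all} entries of $A$ are nonnegative, diagonal included. This is strictly stronger than Metzler, and it is exactly what unbounded graininess forces: $1+\mu a_{ii}\ge 0$ for arbitrarily large $\mu$ gives $a_{ii}\ge 0$. The extended-arithmetic conventions in the paper already deliver the correct statement here without any special interpretation, so your stated worry about this degenerate case is misplaced, and your proposed reading of it is wrong.
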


The condition that $A_\T$ is nonnegative implies that $A$ is a Metzler matrix, i.e. its off-diagonal elements are nonnegative. Let $M$ be an $n\times n$ Metzler matrix. Let us set $c(M):=\min\{a\geq 0 : A+aI\geq 0\}$. From the definition of $c(M)$ we get:

\begin{prop} \label{pMet}
$A_\T$ is nonnegative if and only if $A$ is Metzler and $c(A)\leq \gamma(\T)$.
\end{prop}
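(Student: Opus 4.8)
The plan is to prove both implications directly, treating separately the cases $\gamma(\T)<+\infty$ and $\gamma(\T)=+\infty$, and relying on the elementary remark that for a real scalar $a$ the matrix $A+aI$ has the same off-diagonal entries as $A$ and has diagonal entries $A_{ii}+a$. Before starting I would check that $c(A)$ is well defined whenever $A$ is Metzler: the set $\{a\geq 0:A+aI\geq 0\}$ is exactly $[\max\{0,\max_i(-A_{ii})\},+\infty)$, which is nonempty and closed, so the minimum is attained and equals $\max\{0,\max_i(-A_{ii})\}$; in particular, for a nonnegative real $a$ one has $A+aI\geq 0$ if and only if $a\geq c(A)$.

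For the ``only if'' part, suppose $A_\T=A+\gamma(\T)I\geq 0$. If $\gamma(\T)<+\infty$, then the off-diagonal entries of $A$ equal those of $A_\T$ and are therefore nonnegative, so $A$ is Metzler; moreover $\gamma(\T)$ is a nonnegative real with $A+\gamma(\T)I\geq 0$, hence $c(A)\leq\gamma(\T)$ by the remark above. If $\gamma(\T)=+\infty$ (this covers $\T=\R$ and more generally any time scale whose graininess is unbounded), then by the arithmetic conventions on $\bar{\R}$ the diagonal entries of $A_\T$ are all $+\infty\geq 0$, so the requirement $A_\T\geq 0$ again reduces to $A$ being Metzler, and $c(A)\leq+\infty=\gamma(\T)$ holds trivially.

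For the ``if'' part, suppose $A$ is Metzler and $c(A)\leq\gamma(\T)$. The off-diagonal entries of $A_\T$ coincide with those of $A$ and are nonnegative. For the diagonal entries: if $\gamma(\T)=+\infty$ then $(A_\T)_{ii}=A_{ii}+\gamma(\T)=+\infty\geq 0$; if $\gamma(\T)<+\infty$ then $\gamma(\T)\geq c(A)$ together with $A+c(A)I\geq 0$ gives $(A_\T)_{ii}=A_{ii}+\gamma(\T)\geq A_{ii}+c(A)\geq 0$. Hence $A_\T\geq 0$, which completes the equivalence.

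I do not anticipate a genuine obstacle: the argument is a short computation. The only points that need a little care are the bookkeeping with the $+\infty$ conventions on $\bar{\R}$, so that the statement and proof stay correct when the graininess of $\T$ is unbounded, and the preliminary verification that the minimum defining $c(A)$ is actually attained.
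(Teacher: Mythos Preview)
Your proof is correct and matches the paper's approach: the paper simply records that the proposition follows ``from the definition of $c(M)$'' and gives no further argument, and your write-up just fills in those elementary details. One small slip in a parenthetical remark: $\gamma(\T)=+\infty$ corresponds to $\bar{\mu}_\T=0$ (graininess identically zero, as for $\T=\R$), not to unbounded graininess, which would instead give $\gamma(\T)=0$; this does not affect the validity of your argument.
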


For a real $n\times n$ matrix $M$ let $r(M):=\max\{|\lambda|:\lambda\in\spec(M)\}$ be the \emph{spectral radius} of $M$ and $\eta(M):=\max\{\rea \lambda: \lambda\in\spec(M)\}$ be the \emph{spectral abscissa}.

\begin{theorem}[Perron-Frobenius]\label{thPF}
  Let $P\in\R^{n\times n}_+$. Then $r(P)$ is an eigenvalue of $P$.
\end{theorem}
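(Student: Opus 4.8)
The plan is to treat Theorem~\ref{thPF} as the classical Perron--Frobenius statement and supply the standard argument; only the weak conclusion ``$r(P)$ is an eigenvalue'' is needed here, not simplicity or positivity of the associated eigenvector. First I would dispose of the trivial case: the spectrum of $P$ is nonempty, so if $r(P)=0$ then $\spec(P)=\{0\}$ and $r(P)$ is an eigenvalue; hence from now on assume $r(P)>0$.

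My preferred route is analytic. By Gelfand's formula $r(P)=\lim_{k\to\infty}\|P^k\|^{1/k}$, so the matrix power series $R(t):=\sum_{k\ge 0}t^k P^k=(I-tP)^{-1}$ converges for $|t|<1/r(P)$ and diverges for $|t|>1/r(P)$; since $P\ge 0$, each entry $R_{ij}(t)=\sum_{k\ge 0}t^k(P^k)_{ij}$ is a power series with nonnegative coefficients, and by equivalence of norms at least one of them has radius of convergence exactly $t_*:=1/r(P)$. By Pringsheim's theorem the real point $t_*$ is then a singularity of that $R_{ij}$. On the other hand $R_{ij}(t)=(\mathrm{adj}(I-tP))_{ij}/\det(I-tP)$ is a rational function whose only singularities are zeros of $\det(I-tP)$; therefore $\det(I-t_*P)=0$, i.e. $r(P)=1/t_*\in\spec(P)$.

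An alternative, purely topological, route would be the textbook one. If $P$ is strictly positive, the map $x\mapsto Px/(\mathbf 1^\top Px)$ is a continuous self-map of the standard simplex, so Brouwer's fixed point theorem gives $v\ge 0$, $v\ne 0$, with $Pv=\lambda v$ and $\lambda=\mathbf 1^\top Pv>0$. Applying the same construction to $P^\top$ produces a strictly positive left eigenvector $y$ with $y^\top P=\lambda y^\top$, and for any eigenpair $(\mu,w)$ the componentwise inequality $|\mu|\,|w|\le P|w|$ paired with $y$ yields $|\mu|\,y^\top|w|\le\lambda\,y^\top|w|$, hence $|\mu|\le\lambda$ and $\lambda=r(P)$. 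The general case $P\ge 0$ then follows by running this for $P_\eps:=P+\eps\,\mathbf 1\mathbf 1^\top$, using continuity of the roots of the characteristic polynomial to pass $r(P_\eps)\to r(P)$ and extracting a convergent subsequence of the simplex-normalized Perron eigenvectors of $P_\eps$ to obtain an eigenvector of $P$ for $r(P)$.

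I expect the only real obstacle to lie in the topological route: one must ensure the limiting eigenvector does not collapse to $0$, which is exactly why the eigenvectors are normalized on the compact simplex, and one must invoke continuity of the spectral radius under perturbation. In the analytic route the sole nontrivial inputs are Gelfand's spectral radius formula, used to identify $1/r(P)$ with the common radius of convergence of the entries of $R(t)$, and Pringsheim's theorem on power series with nonnegative coefficients; once these are granted the conclusion is immediate.
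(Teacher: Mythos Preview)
Your proposal is correct. The paper does not actually prove Theorem~\ref{thPF}; it merely states the result and refers the reader to Gantmacher \cite{Ga}. Both of your arguments are valid proofs of the weak Perron--Frobenius statement needed here. The analytic route via Gelfand's formula and Pringsheim's theorem is clean and self-contained; the only point worth making explicit is that if $\det(I-t_*P)\neq 0$ then the rational function $R_{ij}$ would be analytic at $t_*$, contradicting Pringsheim, so the pole is genuine. Your topological route (Brouwer on the simplex for $P>0$, then a limiting argument for $P\ge 0$) is essentially the classical textbook proof and is close in spirit to what one finds in Gantmacher, which proceeds via a variational characterization in the irreducible case and then passes to the general nonnegative case by perturbation or block-triangular reduction.
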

The proof of Theorem~\ref{thPF} can be found e.g. in \cite{Ga}.

\begin{prop}[\cite{DoKaSiWi}]\label{pMet2}
Let $M$ be a real $n\times n$ Metzler matrix. Then $\eta(M)=r(M+\alpha I)-\alpha$ for all $\alpha\geq c(M)$, and $\eta(M)$ is an eigenvalue of $M$.
\end{prop}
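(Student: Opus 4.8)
The plan is to reduce the statement to the Perron--Frobenius theorem (Theorem~\ref{thPF}) by shifting $M$ into the nonnegative cone. First I would observe that, directly from the definition of $c(M)$, for every $\alpha\geq c(M)$ the matrix $P:=M+\alpha I$ is entrywise nonnegative: its off-diagonal entries are nonnegative because $M$ is Metzler, and its diagonal entries $m_{ii}+\alpha$ are nonnegative because $\alpha\geq c(M)\geq -m_{ii}$ for each $i$. Next I would record the trivial spectral shift $\spec(P)=\{\lambda+\alpha:\lambda\in\spec(M)\}$, so that $\eta(M)+\alpha=\max\{\rea z:z\in\spec(P)\}$, and $r(P)-\alpha$ is an eigenvalue of $M$ whenever $r(P)$ is an eigenvalue of $P$.

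The heart of the argument is the identity $\max\{\rea z:z\in\spec(P)\}=r(P)$ for a nonnegative matrix $P$. The inequality ``$\leq$'' is immediate, since $\rea z\leq|z|\leq r(P)$ for every $z\in\spec(P)$ by definition of the spectral radius. For ``$\geq$'', Theorem~\ref{thPF} gives that $r(P)$ is itself an eigenvalue of $P$, and it is a nonnegative real number, hence $\max\{\rea z:z\in\spec(P)\}\geq r(P)$. Combining the two inequalities yields the identity, and in particular shows the maximum is attained at the real eigenvalue $r(P)$.

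Putting the pieces together, from $\eta(M)+\alpha=\max\{\rea z:z\in\spec(P)\}=r(P)=r(M+\alpha I)$ I obtain $\eta(M)=r(M+\alpha I)-\alpha$ for every $\alpha\geq c(M)$ (the right-hand side is thus automatically independent of the admissible $\alpha$); and since $r(P)$ is an eigenvalue of $P$, the number $\eta(M)=r(P)-\alpha$ is an eigenvalue of $M$. I do not expect a genuine obstacle here: the only points requiring a little care are the unwinding of the definition of $c(M)$ to see that $\alpha\geq c(M)$ forces $P\geq 0$, and the correct use of Perron--Frobenius to guarantee that the spectral radius of $P$ is actually \emph{attained} by a real nonnegative eigenvalue, not merely an upper bound for the moduli of all eigenvalues.
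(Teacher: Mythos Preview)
Your proposal is correct and follows essentially the same approach as the paper's proof: shift $M$ by $\alpha I$ into the nonnegative cone, invoke Perron--Frobenius to see that $r(M+\alpha I)$ is a real eigenvalue (hence equals $\eta(M+\alpha I)$), and then use the spectral shift $\spec(M+\alpha I)=\spec(M)+\alpha$ to conclude. The only difference is that you spell out the two inequalities behind $\eta(P)=r(P)$ and the reason $P\geq 0$ more explicitly than the paper does.
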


\begin{proof}
  Since $M+\alpha I\in\R^{n\times n}_+$ for $\alpha\geq c(M)$, then Theorem~\ref{thPF} says that $r(M+\alpha I)$ is an eigenvalue of $M+\alpha I$. Thus $\eta(M+\alpha I)=r(M+\alpha I)$. But for any $\alpha\in\R$ \begin{equation}\label{e34}
    \spec(M+\alpha I)=\spec(M)+\alpha,
  \end{equation} so $\eta(M+\alpha I)=\eta(M)+\alpha$. Thus $r(M+\alpha I)=\eta(M)+\alpha$ and this gives the required equality. In particular, taking $\alpha=c(M)$ we get $\eta(M)=r(M+c(M)I)-c(M)$. Moreover, since $r(M+c(M)I)$ is an eigenvalue of $M+c(M)I$, then, by \eqref{e34}, $r(M+c(M)I)-c(M)$ is an eigenvalue of $M$.
\end{proof}

\section{Stability}

We consider here uniform exponential stability.

\begin{df}(\cite{DoaKalSie})\label{d2}
System $x^\Delta=Ax$ is \emph{uniformly exponentially stable}  if there are constants $K\geq 1$ and $\alpha>0$, and an open neighborhood $V$ of $0$ in $\mathbb{R}^n$ such that for every $t_0,t\in\mathbb{T}$ with $t\geq t_0$ and every $x_0\in\mathbb{R}^n\cap V$, the forward trajectory $x$ of the system, corresponding to the initial condition $x(t_0)=x_0$, satisfies $\|x(t)\|\leq K\exp(-\alpha(t-t_0))\|x_0\|$.
\end{df}

It is known that the condition $\sup \{\mu_\mathbb{T}(t): t\in\mathbb{T}\}<+\infty$ is necessary for uniform exponential stability of the system $x^\Delta=f(x)$ (see e.g. \cite{BP}). Therefore, from now on we shall assume that $\bar{\mu}_\T:=\sup \{\mu_\mathbb{T}(t): t\in\mathbb{T}\}<+\infty$.

For linear systems on time scales uniform exponential stability has been well investigated.
The following proposition follows directly from the definition.
\begin{prop}[\cite{DoaKalSie}]\label{stab lin}
The following conditions are equivalent:\\
 (i) system $x^\Delta=Ax$ is uniformly exponentially stable,\\
 (ii) there exist constants $\alpha>0$ and $K\geq 1$ such that for all $t_0\in\mathbb{T}$
\begin{equation}\label{ewa2}
\| e_A(t,t_0)\| \leq K
e^{-\alpha(t-t_0)}
\end{equation}
for all $t \in \mathbb{T}$ such that $t\geq t_0.$
\end{prop}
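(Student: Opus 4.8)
\emph{Proof plan.} The equivalence is a direct consequence of linearity (and of homogeneity of solutions of a linear equation), and I would prove the two implications separately. For (ii)~$\Rightarrow$~(i) I would simply take $V=\R^n$: by the existence-and-uniqueness result quoted in Section~2, the forward solution of $x^\Delta=Ax$ with $x(t_0)=x_0$ is $x(t)=e_A(t,t_0)x_0$, so submultiplicativity of the spectral norm gives $\|x(t)\|\le\|e_A(t,t_0)\|\,\|x_0\|\le Ke^{-\alpha(t-t_0)}\|x_0\|$ for all $t\ge t_0$, which is exactly the estimate in Definition~\ref{d2}.

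For (i)~$\Rightarrow$~(ii) the one step that requires an argument is removing the restriction to the neighborhood $V$. Fix $t_0\le t$ in $\T$. For $x_0\neq 0$ choose $\rho>0$ so small that $\rho x_0\in V$ (possible since $V$ is open and contains $0$); applying Definition~\ref{d2} to the initial value $\rho x_0$ and using that the map $x_0\mapsto e_A(t,t_0)x_0$ is linear, I would obtain $\rho\,\|e_A(t,t_0)x_0\|=\|e_A(t,t_0)(\rho x_0)\|\le Ke^{-\alpha(t-t_0)}\rho\|x_0\|$, hence $\|e_A(t,t_0)x_0\|\le Ke^{-\alpha(t-t_0)}\|x_0\|$ for every $x_0\in\R^n$ (trivially also for $x_0=0$). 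Taking the supremum over $\|x_0\|=1$ and recalling that the spectral norm of a matrix coincides with its operator norm on $(\R^n,\|\cdot\|_2)$ then yields \eqref{ewa2} with the same $K$ and $\alpha$.

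I do not expect a genuine obstacle here; the only points worth stating with care are that the constants $K,\alpha$ in Definition~\ref{d2} are already independent of $t_0$, so the bound \eqref{ewa2} produced by running the scaling argument for each fixed pair $t_0\le t$ is automatically uniform in $t_0$, and that the passage from the vector estimate to the matrix estimate is precisely the identification of the spectral norm with the induced Euclidean operator norm.
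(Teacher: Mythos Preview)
Your argument is correct and is precisely the kind of reasoning the paper has in mind: the paper does not give an explicit proof but simply remarks that the proposition ``follows directly from the definition'' (and cites \cite{DoaKalSie}). Your scaling argument to pass from the local neighborhood $V$ to all of $\R^n$, together with the identification of the spectral norm with the induced operator norm, is exactly the routine verification that justifies that remark.
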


We will say that $\lambda \in \mathbb{C}$ is \emph{uniformly exponentially stable} if the scalar equation
\begin{equation} \label{r_sk}
x^\Delta=\lambda x,
\end{equation}
where $x  \in \mathbb{C}^n$,
is uniformly exponentially stable.

The set of all uniformly exponentially stable $\lambda \in \mathbb{C}$ will be denoted by $\mathcal{S}_\T$. It depends on the time scale $\T$. For $\T=\R$ it is equal to $\mathbb{C}_-$, while for $\T=h\Z$ it is the open disc of the radius $1/h$ with the center $-1/h$. But for other time scales such sets are often unknown. More details can be found in \cite{PoeSigWir,DoaKalSie}.

In \cite{DoKaSiWi} the following has been shown:

\begin{theorem} \label{thST}
  Let $\mu(\T)<+\infty$. Then $\mathcal{S}_\T\subset \mathbb{C}_-$ and for $\gamma=\gamma(\T)<+\infty$ the set $\mathcal{S}_\T$ contains the open disc of the radius $\gamma$ and the center at $-\gamma$. For $\gamma(\T)=+\infty$, $\mathcal{S}_\T=\mathbb{C}_-$.
\end{theorem}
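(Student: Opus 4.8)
The plan is to reduce the statement to the scalar representation of the generalized exponential function. For a regressive constant $\lambda$ one has $e_\lambda(t,t_0)=\exp\bigl(\int_{t_0}^t\xi_{\mu_\T(s)}(\lambda)\,\Delta s\bigr)$, where $\xi_h$ is the cylinder transformation, $\xi_h(\lambda)=\tfrac1h\,\mathrm{Log}(1+h\lambda)$ for $h>0$ and $\xi_0(\lambda)=\lambda$. Since $\int_{t_0}^t\Delta s=t-t_0$ and $|\exp z|=\exp(\rea z)$, taking moduli gives $|e_\lambda(t,t_0)|=\exp\bigl(\int_{t_0}^t\rho_\lambda(\mu_\T(s))\,\Delta s\bigr)$ with $\rho_\lambda(h):=\tfrac1{2h}\log|1+h\lambda|^2$ for $h>0$ and $\rho_\lambda(0):=\rea\lambda$. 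By Proposition~\ref{stab lin}, uniform exponential stability of $\lambda$ is then governed by the sign of $\rho_\lambda$ on the set of graininess values, which is contained in $[0,\bar\mu_\T]$; the key elementary remark is that $h\mapsto|1+h\lambda|^2=1+2h\,\rea\lambda+h^2|\lambda|^2$ is a convex quadratic equal to $1$ at $h=0$, so whether it stays $\le1$ on $[0,\bar\mu_\T]$ is decided by its value at $h=\bar\mu_\T$.

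First I would prove $\mathcal{S}_\T\subset\mathbb{C}_-$. If $\rea\lambda\ge0$, then $|1+h\lambda|^2\ge1$ for every $h\ge0$, hence $\rho_\lambda(h)\ge0$ and $|e_\lambda(t,t_0)|\ge1$ for all $t\ge t_0$; this is incompatible with the decay bound $\|e_\lambda(t,t_0)\|\le Ke^{-\alpha(t-t_0)}$ of Proposition~\ref{stab lin}, so no such $\lambda$ is uniformly exponentially stable.

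For the disc containment when $\gamma=\gamma(\T)<+\infty$, I would take $\lambda$ with $|\lambda+\gamma|<\gamma$; multiplying by $\bar\mu_\T=1/\gamma$, this is exactly $|1+\bar\mu_\T\lambda|<1$. Convexity then forces $|1+h\lambda|^2\le1$ on all of $[0,\bar\mu_\T]$, with equality only at $h=0$, and expanding $|1+\bar\mu_\T\lambda|^2<1$ also yields $\rea\lambda<0$. Consequently $h\mapsto\rho_\lambda(h)$ is a continuous map of the compact interval $[0,\bar\mu_\T]$ into $[-\infty,0)$ (continuity at $h=0$ follows from $\log(1+2h\,\rea\lambda+O(h^2))=2h\,\rea\lambda+O(h^2)$), so it attains a strictly negative maximum $-\alpha$. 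Then $\rho_\lambda(\mu_\T(s))\le-\alpha$ for every $s$, hence $|e_\lambda(t,t_0)|\le e^{-\alpha(t-t_0)}$ for all $t\ge t_0$, and $\lambda\in\mathcal{S}_\T$ by Proposition~\ref{stab lin}. The one case needing extra care is a non-regressive $\lambda$, which occurs precisely when $1+h\lambda=0$ for an attained graininess value $h$ (forcing $\lambda$ real and negative); there one reads $\rho_\lambda(h)=-\infty$---equivalently, the forward solution of $x^\Delta=\lambda x$ is annihilated after that step and stays $0$---and the estimate still goes through.

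For the last claim, when $\gamma(\T)=+\infty$ one has $\bar\mu_\T=0$, so $\mu_\T\equiv0$, the delta integral reduces to the ordinary integral, and $|e_\lambda(t,t_0)|=e^{\rea\lambda\,(t-t_0)}$; this decays exponentially, uniformly in $t_0$, if and only if $\rea\lambda<0$, so $\mathcal{S}_\T=\mathbb{C}_-$. I expect the main obstacle to be not the disc geometry---once the convexity remark is available the disc drops out---but the groundwork behind the first paragraph: justifying the scalar exponential formula and the identity $\int_{t_0}^t\Delta s=t-t_0$ on an arbitrary time scale, interchanging $\rea$ with the delta integral, and treating the non-regressive boundary values carefully, so that the reduction to $\sup_{h\in[0,\bar\mu_\T]}\rho_\lambda(h)<0$ is fully rigorous.
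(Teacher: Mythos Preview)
The paper does not give its own proof of this theorem: it is quoted verbatim as a result of Doan, Kalauch, Siegmund and Wirth (reference \cite{DoKaSiWi}), with no argument supplied. So there is nothing in the paper to compare your proposal against.

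That said, your approach is the standard one and is essentially correct. The reduction to the cylinder transformation, the identity $|e_\lambda(t,t_0)|=\exp\bigl(\int_{t_0}^t\rho_\lambda(\mu_\T(s))\,\Delta s\bigr)$, and the observation that $h\mapsto|1+h\lambda|^2$ is a convex quadratic equal to $1$ at $h=0$ (so its behaviour on $[0,\bar\mu_\T]$ is pinned down by its value at the right endpoint) are exactly the ingredients used in \cite{DoKaSiWi} and \cite{PoeSigWir}. Your compactness step---that $\rho_\lambda$ is continuous on $[0,\bar\mu_\T]$ with strictly negative values, hence attains a negative maximum $-\alpha$---is clean and gives the uniform-in-$t_0$ constant required by Definition~\ref{d2}.

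One small point worth tightening: in the non-regressive case you argue that ``the solution is annihilated after that step,'' but uniform exponential stability must hold for \emph{all} initial times $t_0$, including those lying after any such step. The fix is simple: drop the integral formula there and use instead the forward product representation $e_\lambda(t,t_0)=\prod_{s\in[t_0,t)_\T,\ \mu(s)>0}(1+\mu(s)\lambda)\cdot\exp\bigl(\lambda\cdot\text{(Lebesgue measure of the dense part)}\bigr)$; each factor satisfies $|1+\mu(s)\lambda|\le e^{-\alpha\mu(s)}$ because $\rho_\lambda(\mu(s))\le-\alpha$, and a zero factor only helps. This yields $|e_\lambda(t,t_0)|\le e^{-\alpha(t-t_0)}$ uniformly, without needing to invoke $-\infty$ inside an integral.
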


For linear systems uniform exponential stability can be characterized by the eigenvalues of the matrix $A$.

\begin{theorem}[\citep{DoaKalSie}]\label{S2}
The following conditions are equivalent:\\
(i) system $x^\Delta=Ax$ is uniformly exponentially stable,\\
(ii) every $\lambda \in \spec (A)$ is uniformly exponentially stable.
\end{theorem}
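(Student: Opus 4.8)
The plan is to prove the equivalence of (i) and (ii) in Theorem~\ref{S2} by reducing the matrix case to the scalar case via a triangularization argument. First I would put $A$ into upper triangular form over $\C$: there is a unitary (or merely invertible) matrix $U$ with $U^{-1}AU = T$ upper triangular, whose diagonal entries are exactly the eigenvalues $\lambda_1,\dots,\lambda_n$ of $A$. Since $e_{U^{-1}AU}(t,t_0) = U^{-1}e_A(t,t_0)U$ and a change of basis only alters the constant $K$ in the estimate \eqref{ewa2} (by a factor $\|U\|\,\|U^{-1}\|$), uniform exponential stability of $x^\Delta = Ax$ is equivalent to uniform exponential stability of $x^\Delta = Tx$. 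So it suffices to treat upper triangular $A$.

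For the implication (i)$\Rightarrow$(ii): if $x^\Delta = Ax$ is uniformly exponentially stable, then restricting the dynamics to an $A$-invariant coordinate subspace still yields uniform exponential stability; in particular, looking at the last coordinate alone (which decouples as $x_n^\Delta = \lambda_n x_n$ when $A=T$ is triangular), we get that $\lambda_n$ is uniformly exponentially stable. Reordering the eigenvalues on the diagonal (any $\lambda_i$ can be made to appear in a decoupled scalar block by choosing an appropriate invariant flag) shows every $\lambda_i \in \spec(A)$ is uniformly exponentially stable. For (ii)$\Rightarrow$(i): with $A=T$ upper triangular and every diagonal entry $\lambda_i$ uniformly exponentially stable, I would solve the system by back-substitution, starting from $x_n^\Delta = \lambda_n x_n$ and working upward, at each stage treating the equation $x_i^\Delta = \lambda_i x_i + g_i(t)$ where $g_i$ is a fixed linear combination of the already-estimated components $x_{i+1},\dots,x_n$. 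Each $x_i$ is then obtained by the variation-of-constants formula on the time scale, $x_i(t) = e_{\lambda_i}(t,t_0)x_i(t_0) + \int_{t_0}^t e_{\lambda_i}(t,\sigma(s))g_i(s)\,\Delta s$, and one estimates $\|x_i(t)\|$ using the exponential bound $|e_{\lambda_i}(t,t_0)| \leq K_i e^{-\alpha_i(t-t_0)}$ from Proposition~\ref{stab lin} applied to the scalar equation.

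The main obstacle is the convolution-type estimate in the (ii)$\Rightarrow$(i) direction: one must show that the delta-integral $\int_{t_0}^t e_{\lambda_i}(t,\sigma(s))g_i(s)\,\Delta s$, with $g_i(s)$ itself decaying like $e^{-\beta(s-t_0)}$ (possibly times a polynomial factor if eigenvalues coincide), again decays exponentially with a uniform constant independent of $t_0$. This requires a bound of the form $|e_{\lambda_i}(t,\sigma(s))| \leq \tilde K e^{-\tilde\alpha(t-s)}$ uniform in $s$, which follows from Proposition~\ref{stab lin} together with the fact that $e_{\lambda_i}(t,\sigma(s))$ and $e_{\lambda_i}(t,s)$ differ by the bounded factor $1/(1+\mu_\T(s)\lambda_i)$ — here the hypothesis $\bar\mu_\T < +\infty$ (and $\lambda_i \in \s_\T \subset \C_-$, so $1+\mu_\T(s)\lambda_i$ is bounded away from $0$) is exactly what keeps this factor under control. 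Once this uniform kernel estimate is in hand, the remaining work is a routine induction on the coordinate index, absorbing polynomial-in-$(t-t_0)$ growth into a slightly smaller exponential rate. An alternative to the explicit back-substitution would be to invoke Theorem~\ref{thST} or known spectral-mapping results for $e_A(t,t_0)$, but the triangularization-plus-variation-of-constants route is the most self-contained.
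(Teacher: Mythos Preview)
The paper does not prove Theorem~\ref{S2} at all: it is quoted from \cite{DoaKalSie} and used as a black box. So there is no ``paper's own proof'' to compare against. That said, your Schur-triangularization-plus-back-substitution outline is essentially the standard argument (and is in spirit what is done in \cite{DoaKalSie,PoeSigWir}), and the overall strategy is sound.

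One genuine slip, though. Your parenthetical justification for the kernel bound --- that $1+\mu_\T(s)\lambda_i$ is bounded away from $0$ because $\lambda_i\in\s_\T\subset\C_-$ --- is false in general. For instance, on a time scale whose graininess takes the value $1$ at infinitely many points, the number $\lambda=-1$ lies in $\s_\T$ (the exponential vanishes after the first such point, hence is trivially uniformly exponentially stable), yet $1+\mu_\T(s)\lambda=0$ there. Fortunately you do not need that factor at all: uniform exponential stability of $\lambda_i$ means, by Proposition~\ref{stab lin}, that $|e_{\lambda_i}(t,\tau)|\le K_i e^{-\alpha_i(t-\tau)}$ for \emph{every} pair $t\ge\tau$ in $\T$, so you may take $\tau=\sigma(s)$ directly and use $\sigma(s)-s\le\bar\mu_\T<\infty$ to get
\[
|e_{\lambda_i}(t,\sigma(s))|\le K_i\,e^{-\alpha_i(t-\sigma(s))}\le K_i\,e^{\alpha_i\bar\mu_\T}\,e^{-\alpha_i(t-s)}.
\]
This is where bounded graininess actually enters; no regressivity of $\lambda_i$ is required (the forward variation-of-constants formula holds without it). With this correction the inductive convolution estimate goes through as you describe.
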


\begin{corr}\label{c15}
  System $x^\Delta=Ax$ is uniformly exponentially stable if and only if $\spec (A)\subset \mathcal{S}_\T$.
\end{corr}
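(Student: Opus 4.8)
The plan is to deduce the corollary directly from Theorem~\ref{S2} by translating condition (ii) of that theorem into a set-containment statement. Recall that, by the definition preceding the statement of $\mathcal{S}_\T$, a complex number $\lambda$ is uniformly exponentially stable precisely when $\lambda \in \mathcal{S}_\T$. Hence the phrase ``every $\lambda \in \spec(A)$ is uniformly exponentially stable'' is, by unwinding the definitions, the same as ``$\lambda \in \mathcal{S}_\T$ for every $\lambda \in \spec(A)$'', which is exactly the assertion $\spec(A) \subset \mathcal{S}_\T$.

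First I would invoke Theorem~\ref{S2} to record the equivalence: system $x^\Delta = Ax$ is uniformly exponentially stable if and only if every eigenvalue of $A$ is uniformly exponentially stable. Then I would replace ``every eigenvalue of $A$ is uniformly exponentially stable'' by ``$\spec(A)\subset\mathcal{S}_\T$'', justified by the definition of $\mathcal{S}_\T$ as the set of all uniformly exponentially stable $\lambda\in\mathbb{C}$. Concatenating these two equivalences yields the stated biconditional. One should note in passing that $\spec(A)$ is a subset of $\mathbb{C}$ (the matrix $A$ is real, but its eigenvalues need not be), so the containment is meaningful, and that $\spec(A)$ is finite, so ``for every $\lambda$'' and ``the set is contained in'' genuinely coincide here.

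There is essentially no obstacle: the corollary is a pure reformulation and the argument is a two-line chain of equivalences. The only thing to be careful about is making explicit that membership in $\mathcal{S}_\T$ \emph{is} the definition of uniform exponential stability of the scalar number $\lambda$, so that no new content is being smuggled in. If one wanted a slightly more self-contained presentation, one could also mention that the result may equivalently be phrased using Theorem~\ref{thST}, namely that a necessary condition for the containment (hence for stability) is $\spec(A)\subset\mathbb{C}_-$, but this is not needed for the proof of the corollary itself.
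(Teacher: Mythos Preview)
Your proposal is correct and matches the paper's approach: the corollary is stated immediately after Theorem~\ref{S2} with no separate proof, being an immediate restatement of condition (ii) via the definition of $\mathcal{S}_\T$. Your two-line chain of equivalences is exactly the intended justification.
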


For positive systems we are interested in what happens with the trajectories that start from initial states belonging to $\R^n_+$. This leads to the following definition.

\begin{df}\label{d3}
Assume that system $x^\Delta=Ax$ is positive. We say that $x^\Delta=Ax$ is \emph{positively uniformly exponentially stable}  if there are constants $K\geq 1$ and $\alpha>0$, and an open neighborhood $V$ of $0$ in $\mathbb{R}^n$ such that for every $t_0,t\in\mathbb{T}$ with $t\geq t_0$ and every $x_0\in\mathbb{R}^n_+\cap V$, the forward trajectory $x$ of the system, corresponding to the initial condition $x(t_0)=x_0$, satisfies $\|x(t)\|\leq K\exp(-\alpha(t-t_0))\|x_0\|$ for all $t\in\mathbb{T}$ such that $t\geq t_0$.
\end{df}

It appears that for linear positive systems both properties coincide.

\begin{prop}[\cite{B12}] \label{p41}
  A positive system $x^\Delta=Ax$ is positively uniformly exponentially stable if and only if it is uniformly exponentially stable.
\end{prop}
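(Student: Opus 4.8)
The plan is to show the two implications separately, with the nontrivial direction being that positive uniform exponential stability implies (full) uniform exponential stability. The easy direction is immediate: if $x^\Delta=Ax$ is uniformly exponentially stable, then the estimate $\|x(t)\|\le K e^{-\alpha(t-t_0)}\|x_0\|$ holds for \emph{all} $x_0\in\R^n\cap V$, in particular for $x_0\in\R^n_+\cap V$, so Definition~\ref{d3} is satisfied with the same constants and the same neighborhood. So the whole content is in the converse.

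For the converse, the idea is to exploit the structure forced by positivity, namely that $A$ is a Metzler matrix (this follows from Proposition~\ref{ppos} and Proposition~\ref{pMet}), together with Corollary~\ref{c15}, which reduces uniform exponential stability to the spectral condition $\spec(A)\subset\s_\T$. The key observation is that, by Proposition~\ref{pMet2}, a Metzler matrix $M=A$ has $\eta(A)=\max\{\rea\lambda:\lambda\in\spec(A)\}$ equal to an actual eigenvalue of $A$ — call it $\lambda^\ast$, a \emph{real} eigenvalue. I would argue that positive uniform exponential stability forces $\lambda^\ast\in\s_\T$, and then that every eigenvalue of $A$ lies in $\s_\T$. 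The first step: the eigenvector $v$ associated with the Perron–Frobenius eigenvalue $r(A+c(A)I)$ of the nonnegative matrix $A+c(A)I$ can be taken in $\R^n_+$, and it is also an eigenvector of $A$ for the eigenvalue $\lambda^\ast=\eta(A)$. Starting the system at $x_0=\epsilon v\in\R^n_+\cap V$ gives the scalar trajectory $x(t)=\epsilon\, e_{\lambda^\ast}(t,t_0)v$, so the decay estimate in Definition~\ref{d3} forces $\|e_{\lambda^\ast}(t,t_0)\|\le K e^{-\alpha(t-t_0)}$ for all $t\ge t_0$ and all $t_0$, i.e. $\lambda^\ast=\eta(A)\in\s_\T$.

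It then remains to pass from $\eta(A)\in\s_\T$ to $\spec(A)\subset\s_\T$. Here I would invoke Theorem~\ref{thST}: $\s_\T$ contains the open disc $D$ of radius $\gamma=\gamma(\T)$ centered at $-\gamma$ (and when $\gamma=+\infty$, $\s_\T=\C_-$, so one only needs $\eta(A)<0$). Since positivity gives $c(A)\le\gamma(\T)$ by Proposition~\ref{pMet}, one has $A+\gamma I\ge0$, and for any $\lambda\in\spec(A)$ the point $\lambda+\gamma\in\spec(A+\gamma I)$ satisfies $|\lambda+\gamma|\le r(A+\gamma I)=\eta(A)+\gamma$ (using Proposition~\ref{pMet2} with $\alpha=\gamma$). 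Now $\eta(A)\in\s_\T\subset\C_-$ means $\eta(A)<0$, and moreover $\eta(A)\in D$ forces $\eta(A)+\gamma<\gamma$, hence every $\lambda\in\spec(A)$ lies in the closed disc of radius $\eta(A)+\gamma<\gamma$ about $-\gamma$, which is contained in the open disc $D\subset\s_\T$. (The case $\gamma=+\infty$ is even simpler: $\eta(A)<0$ and $\rea\lambda\le\eta(A)<0$ give $\spec(A)\subset\C_-=\s_\T$.) Thus $\spec(A)\subset\s_\T$, and Corollary~\ref{c15} yields uniform exponential stability.

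The main obstacle is the reduction $\eta(A)\in\s_\T\Rightarrow\spec(A)\subset\s_\T$: one must be careful that the stability region $\s_\T$ need not be convex or a disc for a general time scale, so the argument genuinely relies on the \emph{inclusion} of the disc $D$ from Theorem~\ref{thST} together with the Metzler-specific bound $|\lambda+\gamma|\le\eta(A)+\gamma$ from Proposition~\ref{pMet2}. A secondary point requiring care is the boundary case: the disc $D$ is open, so I must check that $\eta(A)\in D$ (strict), which follows because $\eta(A)\in\s_\T$ and, by the decay rate $\alpha>0$ in Definition~\ref{d3}, $\lambda^\ast$ lies strictly inside the stability region rather than merely on its boundary — in fact $e_{\lambda^\ast}(t,t_0)$ decaying exponentially already puts $\eta(A)$ in the interior, and one then uses that the disc is the relevant neighborhood.
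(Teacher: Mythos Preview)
The paper does not supply its own proof of this proposition: it is quoted from \cite{B12} and used as a black box. So there is no in-paper argument to compare against.

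That said, your argument is correct. The nontrivial implication is handled cleanly: positivity makes $A$ Metzler, Perron--Frobenius gives a nonnegative eigenvector $v$ for the real eigenvalue $\lambda^\ast=\eta(A)$, and testing the positive decay estimate on the one-dimensional trajectory $t\mapsto e_{\lambda^\ast}(t,t_0)v$ yields $\lambda^\ast\in\mathcal{S}_\T$. Your passage from $\eta(A)\in\mathcal{S}_\T$ to $\spec(A)\subset\mathcal{S}_\T$ is exactly the disc argument the paper itself uses in the sufficiency half of Theorem~\ref{th21}, so that step is certainly in the spirit of the paper.

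One simplification: your final paragraph worries about showing $\eta(A)$ lies in the \emph{open} disc $D$, but this is not needed. From $\eta(A)\in\mathcal{S}_\T\subset\C_-$ you already have $\eta(A)<0$, hence $r(A+\gamma I)=\eta(A)+\gamma<\gamma$ directly, and every $\lambda\in\spec(A)$ satisfies $|\lambda+\gamma|\le r(A+\gamma I)<\gamma$, placing it in the open disc $D\subset\mathcal{S}_\T$. You never have to locate $\eta(A)$ itself inside $D$; only the strict inequality $\eta(A)<0$ matters. A minor point of rigor: the existence of a \emph{nonnegative} eigenvector for $r(A+c(A)I)$ is part of the full Perron--Frobenius theorem but is not stated in the paper's Theorem~\ref{thPF}, so you should cite it explicitly.
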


Proposition~\ref{p41} implies that for positive linear systems positive uniform exponential stability can be characterized with the aid of the spectrum of the matrix $A$. However this spectrum for a positive system has a specific structure, so other tools for checking (positive) uniform exponential stability can be employed.

\begin{theorem}\label{th21}
  A linear positive system $x^\Delta=Ax$ is positively uniformly exponentially stable if and only if all the coefficients of the characteristic polynomial $\chi_A$ of the matrix $A$ are positive.
\end{theorem}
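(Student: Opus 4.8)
The plan is to connect the sign pattern of the coefficients of $\chi_A$ to the location of $\spec(A)$, using the fact that a positive system has a Metzler matrix $A$, so that by Proposition~\ref{pMet2} the spectral abscissa $\eta(A)$ is itself an eigenvalue of $A$. By Proposition~\ref{p41} and Corollary~\ref{c15}, positive uniform exponential stability of $x^\Delta=Ax$ is equivalent to $\spec(A)\subset\s_\T$. Since $\s_\T\subset\C_-$ by Theorem~\ref{thST}, a necessary condition is $\eta(A)<0$; conversely, I would argue that $\eta(A)<0$ already forces $\spec(A)\subset\s_\T$ for a Metzler $A$, because positivity of the system means $c(A)\le\gamma(\T)$ (Propositions~\ref{ppos},~\ref{pMet}), and Theorem~\ref{thST} guarantees that the disc of radius $\gamma(\T)$ centered at $-\gamma(\T)$ lies in $\s_\T$; one checks that every eigenvalue $\lambda$ of a Metzler matrix with $c(A)\le\gamma(\T)$ and $\rea\lambda<0$ lies in that disc, since $|\lambda+\gamma(\T)|\le r(A+\gamma(\T)I)=\eta(A)+\gamma(\T)<\gamma(\T)$ by Perron--Frobenius applied to the nonnegative matrix $A+\gamma(\T)I$. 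Hence the stability question reduces to: \emph{$\eta(A)<0$ if and only if all coefficients of $\chi_A$ are positive}.

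For that reduced statement I would proceed as follows. Write $\chi_A(s)=s^n+a_{n-1}s^{n-1}+\dots+a_1 s+a_0$. The easy direction: if $\eta(A)<0$, then every real eigenvalue is negative and every complex eigenvalue comes in a conjugate pair with negative real part, so $\chi_A$ factors into real factors $(s-\lambda)$ with $\lambda<0$ and $(s^2-2(\rea\mu)s+|\mu|^2)$ with $\rea\mu<0$; all such factors have strictly positive coefficients, and a product of polynomials with positive coefficients has positive coefficients. For the converse, suppose all $a_i>0$. Then $\chi_A$ has no nonnegative real root, so $\eta(A)\ne 0$ and $A$ has no nonnegative real eigenvalue; but $\eta(A)$ \emph{is} a real eigenvalue of $A$ by Proposition~\ref{pMet2}, hence $\eta(A)<0$. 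This is the point where the Metzler structure is essential: for a general matrix positivity of the coefficients of $\chi_A$ does not imply Hurwitz stability, but the Perron--Frobenius property pins the rightmost eigenvalue to the real axis.

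I would then assemble the pieces: positive uniform exponential stability $\Leftrightarrow$ uniform exponential stability (Proposition~\ref{p41}) $\Leftrightarrow$ $\spec(A)\subset\s_\T$ (Corollary~\ref{c15}) $\Leftrightarrow$ $\eta(A)<0$ (by the disc argument above, using that the system is positive hence $A$ is Metzler with $c(A)\le\gamma(\T)$) $\Leftrightarrow$ all coefficients of $\chi_A$ are positive (the reduced statement). The main obstacle I anticipate is the middle equivalence $\spec(A)\subset\s_\T \Leftrightarrow \eta(A)<0$: one inclusion ($\s_\T\subset\C_-$) is immediate from Theorem~\ref{thST}, but the reverse needs the quantitative containment of the disc of radius $\gamma(\T)$ in $\s_\T$ together with the Perron--Frobenius bound $r(A+\gamma(\T)I)=\eta(A)+\gamma(\T)$, and one must handle the boundary case $\gamma(\T)=+\infty$ separately, where $\s_\T=\C_-$ and the claim is trivial. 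Everything else is elementary polynomial algebra.
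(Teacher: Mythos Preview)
Your proposal is correct and follows essentially the same route as the paper: both reduce positive uniform exponential stability to $\spec(A)\subset\s_\T$ via Proposition~\ref{p41} and Corollary~\ref{c15}, use the Metzler/Perron--Frobenius fact (Proposition~\ref{pMet2}) that $\eta(A)$ is a real eigenvalue to pass from positivity of the coefficients of $\chi_A$ to $\eta(A)<0$, and then invoke $r(A+\gamma(\T)I)=\eta(A)+\gamma(\T)<\gamma(\T)$ together with the disc inclusion of Theorem~\ref{thST} (treating $\gamma(\T)=+\infty$ separately) to land in $\s_\T$. The only difference is organizational: you isolate the equivalence ``$\eta(A)<0\Leftrightarrow$ all coefficients of $\chi_A$ positive'' as a separate lemma, whereas the paper handles necessity and sufficiency in one pass.
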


\begin{proof}
  Necessity. Assume that the system $x^\Delta=Ax$ is positively uniformly exponentially stable. From Proposition~\ref{p41}, Theorem~\ref{S2} and Theorem~\ref{thST} it follows that all eigenvalues $\lambda_1,\ldots,\lambda_n$ of $A$ have negative real parts. Then
  \[ \chi_A(\lambda)=(\lambda-\lambda_1)\ldots(\lambda-\lambda_n). \]
  If $\lambda_i$ is complex then $\lambda_i=-a+bi$ for $a>0$. Since $A$ is real, $\lambda_j=-a-bi$ for some $j\neq i$. Then the polynomial $(\lambda-\lambda_i)(\lambda-\lambda_j)=\lambda^2+2a\lambda+b^2$ has all its coefficients positive. If $\lambda_k$ is real, then $\lambda_k=-c$ for $c>0$, so the polynomial $\lambda-\lambda_k=\lambda+c$ has all its coefficients positive as well. Since $\chi_A(\lambda)$ is a product of polynomials of these two types, it also has all its coefficients positive.

  Sufficiency. Let us assume that $\chi_A(\lambda)=\lambda^n+a_{n-1}\lambda^{n-1}+\ldots+a_0$ has all its coefficients positive. Since $A$ is a Metzler matrix, from Proposition~\ref{pMet2} it follows that $\lambda_1:=\eta(A)\in\R$ is an eigenvalue of $A$. If it were nonnegative, then $\chi_A(\lambda_1)$ would be greater than $0$, and this would contradict the fact that $\lambda_1$ is an eigenvalue of $A$. Thus $\eta(A)<0$, which means that all eigenvalues of $A$ have negative real parts. Assume now that $\gamma(\T)<+\infty$. Since $c(A)\leq \gamma(\T)$ (from positivity of the system and Proposition~\ref{pMet}), using again Proposition~\ref{pMet2} we get
  \[  r(A+\gamma(\T)I)=\eta(A)+\gamma(\T)<\gamma(\T), \]
  which implies that $\spec(A+\gamma(\T)I)$ is contained in the disc of the radius $\gamma(\T)$ and the center at $0$. By \eqref{e34}, $\spec(A)=\spec(A+\gamma(\T)I)-\gamma(\T)$, so $\spec(A)$ is contained in in the disc of the radius $\gamma(\T)$ and the center at $-\gamma(\T)$. From Theorem~\ref{thST} we get that $\spec(A)\subset \mathcal{S}(\T)$, which, by Corollary~\ref{c15}, implies that the system is uniformly exponentially stable. For $\gamma(\T)=+\infty$, $\mathcal{S}(\T)=\C_-$, so $\spec(A)\subset \mathcal{S}(\T)$ as well.
  \end{proof}

\begin{rem}
  This fact has long been known for continuous-time systems \cite{FR,Ka}, when $\T=\R$. For a discrete-time positive system of the form
  \begin{equation}\label{e25}
    x(k+1)=\tilde{A}x(k),
  \end{equation} (uniform) exponential stability has been characterized by the condition that the characteristic polynomial of $A:=\tilde{A}-I$ has positive coefficients \cite{FR,Ka}. But \eqref{e25} is equivalent to the system on the time scale $\T=\Z$:
  \[   x^\Delta(k)=x(k+1)-x(k)=Ax(k), \]
  so this characterization of uniform exponential stability agrees with the characterization presented in Theorem~\ref{th21} for systems on arbitrary time scales with the bounded graininess. In particular, this characterization is valid for $\T=h\Z$, where $h>0$. Consider an Euler discretization of the positive continuous-time system $\dot{x}=Ax$ with the step $h$:
  \[  x^\Delta(kh):=\frac{x((k+1)h)-x(kh)}{h}=Ax(kh). \]
  Since the matrix $A$ is the same for both systems, from Theorem~\ref{th21} we conclude that uniform exponential stability of the continuous-time system implies uniform exponential stability of the discretized system, provided the latter is positive. This holds if and only if $h\leq 1/c(A)$.
\end{rem}

\section{Stabilization}

The main goal of this section is to find conditions for feedback stabilization of a positive linear system on a time scale $\T$:
\begin{equation}\label{sys}
  \Sigma: \ x^\Delta=Ax+Bu,
\end{equation}
with $x(t)\in\R^n$, $u(t)\in \R^m$.
However we want to preserve positivity of the system.

\begin{df}
System~\ref{sys} is \emph{positively stabilizable} if there is a feedback $u=Kx$, such that the closed-loop system $x^\Delta=(A+BK)x$ is positive and positively uniformly exponentially stable.
\end{df}

\begin{rem}
  We do not assume that $K$ is nonnegative, so $u=Kx$ may have negative components, even for $x\in\R^n_+$. This may be interpreted as enlarging the set of admissible control values and making it depend on the state $x$. But we still have nonnegativity of trajectories of the system that start from the points of $\R^n_+$ and correspond to controls defined by the feedback. On the other hand, if the open loop system is not positively uniformly exponentially stable, then by applying the feedback $u=Kx$ with a nonnegative $K$ we cannot stabilize the system (see \cite{Ka}).
\end{rem}

Since positive stabilizability implies usual stabilizability (where we do not require that the closed-loop system is positive), we have the following:

\begin{prop}\label{p28}
  If the positive system~\eqref{sys} is positively stabilizable, then the following condition holds:
  \begin{equation}\label{e56}
    \forall \lambda\in\spec (A):\ \lambda\notin S(\T) \Rightarrow \rank [\lambda I-A,B]=n.
  \end{equation}
\end{prop}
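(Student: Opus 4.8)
The plan is to reduce this statement to the classical Hautus/PBH test for stabilizability of linear systems on time scales and then invoke the known equivalence between that rank condition and the existence of a stabilizing feedback. First I would observe that positive stabilizability is, by definition, strictly stronger than ordinary stabilizability: if there is a feedback $u=Kx$ making the closed-loop system $x^\Delta=(A+BK)x$ positive and positively uniformly exponentially stable, then by Proposition~\ref{p41} that closed-loop system is in particular uniformly exponentially stable in the ordinary sense. Hence the system \eqref{sys} is (ordinarily) stabilizable, i.e.\ there exists a real $m\times n$ matrix $K$ such that $\spec(A+BK)\subset\mathcal{S}_\T$.

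Next I would invoke the Hautus-type criterion for stabilizability on time scales: a pair $(A,B)$ admits a feedback $K$ with $\spec(A+BK)\subset\mathcal{S}_\T$ if and only if for every $\lambda\in\spec(A)$ with $\lambda\notin\mathcal{S}_\T$ one has $\rank[\lambda I-A,\,B]=n$. The ``only if'' direction of this criterion is the relevant one here: if the rank dropped for some unstable eigenvalue $\lambda$, there would be a nonzero left eigenvector $v^\top$ with $v^\top A=\lambda v^\top$ and $v^\top B=0$, whence $v^\top(A+BK)=\lambda v^\top$ for every $K$, so $\lambda$ would remain an eigenvalue of the closed-loop matrix and the closed-loop system could not be uniformly exponentially stable by Corollary~\ref{c15}. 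Combining this with the reduction in the previous paragraph yields exactly the implication \eqref{e56}.

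The main obstacle is that this PBH-type stabilizability criterion is not among the results stated in the excerpt, so one must either cite it from the time-scale control literature or supply the short eigenvector argument sketched above in place of the general criterion. In fact only the easy (necessity) half is needed, and it can be proved directly and self-containedly: assume \eqref{sys} is positively stabilizable via $K$; by Proposition~\ref{p41} the closed-loop system is uniformly exponentially stable, so by Corollary~\ref{c15} $\spec(A+BK)\subset\mathcal{S}_\T$; now fix $\lambda\in\spec(A)$ with $\lambda\notin\mathcal{S}_\T$ and suppose toward a contradiction that $\rank[\lambda I-A,\,B]<n$; pick $v\neq 0$ with $v^\top[\lambda I-A,\,B]=0$, i.e.\ $v^\top A=\lambda v^\top$ and $v^\top B=0$; then $v^\top(A+BK)=\lambda v^\top$, so $\lambda\in\spec(A+BK)$, contradicting $\spec(A+BK)\subset\mathcal{S}_\T$. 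Hence $\rank[\lambda I-A,\,B]=n$, which is \eqref{e56}. The only subtlety worth a sentence is that the eigenvector $v$ may be complex while $K$ is real, but the computation above holds verbatim over $\C$, so this causes no difficulty.
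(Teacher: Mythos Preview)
Your proposal is correct and follows essentially the same route as the paper: the paper simply remarks that positive stabilizability implies ordinary stabilizability and then cites \cite{BarPioWyr} for the equivalence of ordinary stabilizability with condition~\eqref{e56}. Your version is more detailed in that you make explicit the use of Proposition~\ref{p41} and, in addition to the citation option, supply the short left-eigenvector argument for the necessity half of the Hautus test; both additions are fine and self-contained, but the underlying strategy is the same as the paper's.
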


Condition~\eqref{e56} is equivalent to standard stabilizability \cite{BarPioWyr}, i.e. it does not guarantee existence of $K$ such that the system $x^\Delta=(A+BK)x$ is uniformly exponentially stable and positive at the same time. To find necessary and sufficient condition for positive stabilizability we need to add another condition to \eqref{e56}. For simplicity we assume now that $m=1$, so $B=b=(b_1,\ldots,b_n)^T$ and $K=(k_1,\ldots,k_n)$

Let us define for $j=1,\ldots,n$:\\
$\alpha_j:=-\infty$ if $\gamma(\T)=+\infty$ and for every $i\neq j$, $b_i=0$,\\
$\alpha_j:=\max_{i\neq j,b_i\neq 0}\{ \frac{-a_{ij}}{b_i}\}$, if $b_j=0$ or $\gamma(\T)=+\infty$ and there is $i\neq j$ such that $b_i\neq 0$,\\
$\alpha_j:= \max\{ \max_{i\neq j,b_i\neq 0}\{ \frac{-a_{ij}}{b_i}\},\frac{-a_{jj}-\gamma(\T)}{b_j}\}$ , otherwise.\\
Observe that $\alpha_j\leq 0$ for any $j=1,\ldots,n$.

Now assume that $\rank [b,Ab,\ldots,A^{n-1}b]=k$. Then $A^kb=-a_1b-\ldots -a_kA^{k-1}b$ for some $a_1,\ldots,a_k\in\R$. Let us define a basis of $\R^n$ as follows:\\
$v_{k-i}=A^ib+a_kA^{i-1}b+\ldots+a_{k-i+1}b$ for $i=0,\ldots,k-1$, and, if $k<n$, $v_{k+1},\ldots,v_n$ chosen arbitrarily so that $v_1,\ldots,v_n$ are linearly independent. In particular, $v_k=b$.

Then for $i=0,\ldots,k-2$, $Av_{k-i}=v_{k-i-1}-a_{k-i}b$ and $Av_1=a_1b$. Thus, letting $T=(v_1,\ldots,v_n)$ we get
\begin{equation}\label{e29}
  \tilde{A}:=T^{-1}AT=\begin{pmatrix}
                         0 & 1 & 0 & \cdots & 0 & * \\
                         0 & 0 & 1 & \cdots & 0 & * \\
                         \vdots &\vdots & \vdots & \ddots & \vdots & \vdots \\
                         0 & 0 & 0 & \ldots & 1 & * \\
                         -a_1 & -a_2 & -a_3 & \ldots & -a_k & * &  \\
                         0 & 0 & 0 & \ldots & 0 & A_{22}
                       \end{pmatrix}, \ \tilde{b}:=T^{-1}b=\begin{pmatrix}
                                                                      0 \\
                                                                       0\\
                                                                       \vdots \\
                                                                       0 \\
                                                                       1 \\
                                                                       0
                                                                     \end{pmatrix},
\end{equation}

where $A_{22}$ is a $(n-k)\times (n-k)$ matrix and $0$'s in the last rows of $\tilde{A}$ and $\tilde{b}$ mean zero  $(n-k)\times 1$ matrices.

Now we can state a characterization of positive stabilizability of a positive system.

\begin{theorem}
  The positive system~\eqref{sys} is positively stabilizable if and only if condition~\eqref{e56} is satisfied and the following set of linear inequalities for $K=(k_1,\ldots,k_n)$ is consistent:
  \begin{equation}\label{e67}
    k_j\geq \alpha_j,\ j=1,\ldots,n,\ Kv_i< a_i,\ i=1,\ldots,s.
  \end{equation}
 \end{theorem}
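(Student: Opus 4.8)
The plan is to establish both implications using one and the same matrix $K$, the engine being the similarity \eqref{e29} together with the Metzler stability criterion underlying Theorem~\ref{th21}. Since $m=1$, the feedback $u=Kx$ becomes, in the coordinates $\xi=T^{-1}x$, the feedback $\tilde K:=KT=(Kv_1,\dots,Kv_n)$ applied to $(\tilde A,\tilde b)$; because $\tilde b$ is the $k$-th standard basis vector, $\tilde b\tilde K$ alters only the $k$-th row of $\tilde A$ and preserves the block-triangular shape of \eqref{e29}, its top-left $k\times k$ block becoming the companion matrix $C$ with last row $\bigl(-(a_1-Kv_1),\dots,-(a_k-Kv_k)\bigr)$ and its bottom-right block still $A_{22}$. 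Hence
\[
\chi_{A+bK}(\lambda)=\chi_{A_{22}}(\lambda)\cdot\bigl(\lambda^{k}+(a_k-Kv_k)\lambda^{k-1}+\dots+(a_1-Kv_1)\bigr),
\]
so $\chi_C$ has all its coefficients positive exactly when $Kv_i<a_i$ for $i=1,\dots,k$ (these are the indices $i=1,\dots,s$ of \eqref{e67}; so $s=k$). Moreover, by Propositions~\ref{ppos} and \ref{pMet} the closed-loop system $x^\Delta=(A+bK)x$ is positive iff $(A+bK)+\gamma(\T)I\ge 0$, i.e. $a_{ij}+b_ik_j\ge 0$ for $i\ne j$ and $a_{jj}+b_jk_j+\gamma(\T)\ge 0$; since $b\ge 0$ and $A$ is already Metzler with $c(A)\le\gamma(\T)$, for each fixed $j$ this collapses to $k_j\ge\alpha_j$, the three cases in the definition of $\alpha_j$ (with the $\gamma(\T)=+\infty$ conventions) being precisely the maximum of the finitely many lower bounds that actually arise. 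Thus the inequalities $k_j\ge\alpha_j$ capture closed-loop positivity exactly.

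For sufficiency I would take any $K$ satisfying \eqref{e67}. By the previous paragraph the closed loop is positive and $\chi_C$ has all coefficients positive. The uncontrollable eigenvalues of $A$ are exactly the elements of $\spec(A_{22})$, and for each such $\lambda$ one has $\rank[\lambda I-A,b]<n$, so \eqref{e56} forces $\lambda\in\mathcal{S}_\T\subset\C_-$ by Theorem~\ref{thST}; hence $\chi_{A_{22}}$ is a real polynomial with all roots in the open left half-plane, and therefore — factoring it into factors $\lambda+c$ with $c>0$ and $\lambda^{2}+2a\lambda+(a^{2}+d^{2})$ with $a>0$ — it has all coefficients positive. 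A product of polynomials with all coefficients positive has all coefficients positive, so $\chi_{A+bK}=\chi_{A_{22}}\,\chi_C$ has all coefficients positive; since the closed loop is positive, Theorem~\ref{th21} gives that $x^\Delta=(A+bK)x$ is positively uniformly exponentially stable. Thus this $K$ already positively stabilizes the system.

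For necessity I would start from a feedback $u=Kx$ that positively stabilizes \eqref{sys}. Positive stabilizability implies ordinary stabilizability, so \eqref{e56} holds by Proposition~\ref{p28}. The closed loop being positive gives $k_j\ge\alpha_j$ for all $j$ by the reduction above, and being positively uniformly exponentially stable forces, via Theorem~\ref{th21}, that $\chi_{A+bK}$ has all coefficients positive. Now $A+bK$ is Metzler, so by Proposition~\ref{pMet2} its spectral abscissa $\eta(A+bK)$ is a real eigenvalue; since $\chi_{A+bK}(t)>0$ for every $t\ge 0$, this eigenvalue must be negative, i.e. all eigenvalues of $A+bK$ — hence in particular all roots of $\chi_C$ — lie in the open left half-plane. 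As $\chi_C$ is real, this makes all its coefficients positive, that is $Kv_i<a_i$ for $i=1,\dots,k$. Hence this $K$ itself shows that \eqref{e67} is consistent, which closes the equivalence.

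The genuinely non-obvious point — the one I would flag as the crux — is that a purely \emph{linear} system of inequalities \eqref{e67} can be sufficient for positive stabilizability, even though for $k\ge 3$ the sign conditions $a_i-Kv_i>0$ are only necessary, not sufficient, for $\chi_C$ to be Hurwitz. The resolution is the ``upgrade'' contained in Theorem~\ref{th21}: as soon as closed-loop positivity is in place (itself the linear condition $k_j\ge\alpha_j$), positivity of all coefficients of $\chi_{A+bK}$ is already equivalent to full uniform exponential stability, so no nonlinear Routh--Hurwitz-type condition is ever needed — this is exactly the step (Metzler plus Proposition~\ref{pMet2}) where ``positive coefficients of $\chi_C$'' is secretly upgraded to ``$\chi_C$ Hurwitz''. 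The remaining work is routine but must be done carefully: deriving \eqref{e29} and reading off the factorization of $\chi_{A+bK}$ correctly, checking that the entrywise positivity inequalities reassemble into the three-case definition of $\alpha_j$ (including $\gamma(\T)=+\infty$), and the rank/PBH identification of $\spec(A_{22})$ with the set of uncontrollable eigenvalues of $A$.
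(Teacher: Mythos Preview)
Your argument is correct and follows the same route as the paper: pass to the controllability form \eqref{e29}, read off the factorization $\chi_{A+bK}=\chi_{A_{22}}\cdot\chi_C$, identify $k_j\ge\alpha_j$ with closed-loop positivity, and use Theorem~\ref{th21} as the bridge between ``positive coefficients'' and stability. Your necessity step for $Kv_i<a_i$ is in fact tighter than the paper's: the paper simply asserts that positivity of the coefficients of the product $\chi_C\cdot\chi_{A_{22}}$ forces $a_i-\tilde k_i>0$, which does not follow from the factorization alone (a product of two real polynomials can have all coefficients positive while one factor does not); you instead argue via Proposition~\ref{pMet2} that $\eta(A+bK)<0$, hence every root of $\chi_C$ lies in $\C_-$, which does give the strict inequalities. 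That extra sentence is the right way to close the gap.
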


\begin{proof}
  Necessity. From Proposition~\ref{p28} it follows that condition~\eqref{e56} is necessary for positive stabilizability of system~\eqref{sys}. Moreover, positive stabilizability implies existence of $K=(k_1,\ldots,k_n)$ such that the closed-loop system $x^\Delta=(A+bK)x$ is positive. Thus such $K$ must satisfy $a_{ij}+b_ik_j\geq 0$ for $i\neq j$ and $a_{jj}+b_jk_j+\gamma(\T)\geq 0$ for $j=1,\ldots,n$. If $\gamma(\T)=+\infty$, the second inequality is always satisfied. Easy calculation shows that these inequalities are equivalent to the inequalities $k_j\geq \alpha_j$, $j=1,\ldots,n$. To show that the inequalities $Kv_i\leq a_i$ are also necessary let us transform system~\eqref{sys} to $\tilde{x}^\Delta=\tilde{A}\tilde{x}+\tilde{b}u$, where $\tilde{A}$ and $\tilde{b}$ are given by \eqref{e29}. Positive stabilizability of system~\eqref{sys} implies existence of $K$ such that $\chi_{A+bK}$ has positive coefficients. This is equivalent to existence of $\tilde{K}=KT$ such that $\chi_{\tilde{A}+\tilde{b}\tilde{K}}$ has positive coefficients (since $\tilde{A}+\tilde{b}\tilde{K}=T^{-1}(A+bK)T$ and thus the characteristic polynomials of $\tilde{A}+\tilde{b}\tilde{K}$ and $A+bK$ coincide). Observe that $\chi_{\tilde{A}+\tilde{b}\tilde{K}}(\lambda)=(\lambda^s+ (a_s-\tilde{k}_s)\lambda^{s-1}+\ldots+ (a_2-\tilde{k}_2)\lambda+(a_1-\tilde{k}_1))\chi_{A_{22}}(\lambda)$. Positivity of its coefficients implies that $\tilde{k}_i<a_i$ for $i=1,\ldots,s$. Since $\tilde{k}_i=Kv_i$, the last inequality is equivalent to $Kv_i<a_i$. Thus there must exist $K$ that satisfies \eqref{e67}, so the set of these inequalities is consistent.\\
  Sufficiency. Let $K$ satisfy \eqref{e67}. As in the proof of Necessity, the condition $k_j\geq \alpha_j$, $j=1,\ldots,n$, means positivity of the closed-loop system $x^\Delta=(A+bK)x$. To show that this system is positively uniformly exponentially stable it is enough to verify that the coefficients of $\chi_{A+bK}$ are positive. As in the proof of Necessity, this is equivalent to positivity of coefficients of $\chi_{\tilde{A}+\tilde{b}\tilde{K}}$, with $\tilde{A}$ and $\tilde{b}$ given by ~\eqref{e29} and $\tilde{K}=KT$. As before, $\chi_{\tilde{A}+\tilde{b}\tilde{K}}(\lambda)=(\lambda^s+ (a_s-\tilde{k}_s)\lambda^{s-1}+\ldots+ (a_2-\tilde{k}_2)\lambda+(a_1-\tilde{k}_1))\chi_{A_{22}}(\lambda)$ and the condition $Kv_i< a_i$, $i=1,\ldots,k$, implies $a_i-\tilde{k}_i>0$ for $i=1,\ldots,s$. Observe that the matrix $A_{22}$ defines the uncontrollable subsystem of the system~\eqref{sys}, so from ~\eqref{e56} the  eigenvalues of $A_{22}$ must belong $S(\T)$. Thus their real parts are negative, so $\chi_{A_{22}}$ has positive coefficients (see the proof of Theorem~\ref{th21}). Hence, $\chi_{\tilde{A}+\tilde{b}\tilde{K}}$ has positive coefficients as well.
\end{proof}

 \section{Conclusion}
We provided a characterization of uniform exponential stability of a positive linear system on a time scale. Surprisingly, regardless of the time scale, the condition is the same, which confirms usefulness of the theory and of the language of time scales. This characterization was used then to develop criteria for positive feedback stabilizability. They guarantee existence of a linear feedback that stabilizes the system and preserves its positivity. This was done for one-dimensional controls only, so a natural extension of this part would be to characterize positive stabilizability for the multi-control case. The next step could be studying stabilization of nonlinear systems via linearization.

\section*{Acknowledgment}
This work has been supported by the Bialystok University of Technology grant No. S/WI/1/2016.

\section*{Appendix}
\textbf{Calculus on time scales}\\
 A  \emph{time scale} $\mathbb{T}$ is an arbitrary nonempty closed subset of the set $\mathbb{R}$
of real numbers. In particular $\mathbb{R}$, $h\mathbb{Z}$ for $h>0$ and $q^{\mathbb{N}}:=\{ q^k, k\in\mathbb{N}\}$ for $q>1$ are time scales. We assume that $\mathbb{T}$ is a
topological space with the relative topology induced from $\mathbb{R}$. If $t_0,t_1\in\mathbb{T}$, then $[t_0,t_1]_\mathbb{T}$ denotes the
intersection of the ordinary closed interval with $\mathbb{T}$. Similar notation is used for open, half-open or infinite
intervals.

For $t \in \mathbb{T}$ we define
 the {\it forward jump operator} $\sigma_\T:\mathbb{T} \rightarrow \mathbb{T}$ by
$\sigma_\T(t):=\inf\{s \in \mathbb{T}:s>t\}$ if $t\neq\sup\mathbb{T}$ and $\sigma_\T(\sup\mathbb{T})=\sup\mathbb{T}$ when $\sup\mathbb{T}$ is finite;
the {\it backward jump operator} $\rho_\T:\mathbb{T} \rightarrow \mathbb{T}$ by
$\rho_\T(t):=\sup\{s \in \mathbb{T}:s<t\}$ if $t\neq\inf\mathbb{T}$ and $\rho_\T(\inf\mathbb{T})=\inf\mathbb{T}$ when $\inf\mathbb{T}$ is finite;
  the {\it forward graininess function} $\mu_\T:\mathbb{T} \rightarrow [0,\infty)$ by
$\mu_\T(t):=\sigma_\T(t)-t$;
 the {\it backward graininess function} $\nu_\T:\mathbb{T} \rightarrow [0,\infty)$ by
$\nu_\T(t):=t-\rho_\T(t)$.

If $\sigma_\T(t)>t$, then $t$ is called {\it right-scattered}, while if $\rho_\T(t)<t$, it is called {\it
left-scattered}. If $t<\sup\mathbb{T}$ and $\sigma_\T(t)=t$ then $t$ is called {\it right-dense}. If $t>\inf\mathbb{T}$ and
$\rho_\T(t)=t$, then $t$ is {\it left-dense}.

The time scale $\mathbb{T}$ is \emph{homogeneous}, if $\mu_\T$ and $\nu_\T$ are constant functions. When $\mu_\T\equiv 0$ and $\nu_\T\equiv 0$, then $\mathbb{T}=\mathbb{R}$ or $\mathbb{T}$ is a closed interval (in particular a half-line). When $\mu_\T$ is constant and greater than $0$, then $\mathbb{T}=\mu_\T\mathbb{Z}$.

Let $\mathbb{T}^\kappa:=\{t \in \mathbb{T}: t\;\text{ is nonmaximal or left-dense}\}$. Thus $\mathbb{T}^\kappa$ is got from $\mathbb{T}$
by removing its maximal point if this point exists and is left-scattered.

Let $f:\mathbb{T} \rightarrow \mathbb{R}$ and $t \in \mathbb{T}^\kappa$.  The \emph{delta derivative of $f$ at $t$}, denoted by $f^{\Delta}(t)$, is the
real number with the property that given any $\varepsilon$ there is a neighborhood
$U=(t-\delta,t+\delta)_\mathbb{T}$  such that
\[|(f(\sigma_\T(t))-f(s))-f^{\Delta}(t)(\sigma_\T(t)-s)| \leq \varepsilon|\sigma_\T(t)-s|\]
for all $s \in U$. If $f^{\Delta}(t)$ exists, then we say that \emph{$f$ is delta differentiable at $t$}. Moreover, we say that $f$ is {\it delta differentiable} on $\mathbb{T}^k$ provided $f^{\Delta}(t)$
exists for all $t\in \mathbb{T}^k$.

\begin{ex}
If $\mathbb{T}=\mathbb{R}$, then  $f^{\Delta}(t)=f'(t)$.
 If $\mathbb{T}=h\mathbb{Z}$, then
$f^{\Delta}(t)=\frac{f(t+h)-f(t)}{h}$.
If $\mathbb{T}=q^{\mathbb{N}}$, then $f^{\Delta}(t)=\frac{f(qt)-f(t)}{(q-1)t}$.
\end{ex}

 A function
$f:\mathbb{T} \rightarrow \mathbb{R}$ is called {\it rd-continuous} provided it is continuous at right-dense points in $\mathbb{T}$ and its
left-sided limits exist (finite) at left-dense points in $\mathbb{T}$.
If $f$ is continuous, then it is rd-continuous.

A function
$f:\mathbb{T} \rightarrow \mathbb{R}$ is called \emph{regressive}, if $1+\mu(t)f(t)\neq 0$ for all $t\in\mathbb{T}$.

A function $F:\mathbb{T} \rightarrow \mathbb{R}$ is called an {\it antiderivative} of $f: \mathbb{T} \rightarrow \mathbb{R}$
provided $F^{\Delta}(t)=f(t)$ holds for all $t \in \mathbb{T}^\kappa$. Let $a,b\in\mathbb{T}$. Then the \emph{delta integral} of $f$ on the interval $[a,b)_\mathbb{T}$ is defined by
\[ \int_a^b f(\tau) \Delta \tau :=\int_{[a,b)_\mathbb{T}} f(\tau) \Delta \tau := F(b)-F(a). \]

Riemann and Lebesgue delta integrals on time scales have been also defined (see e.g. \cite{gus}).
It can be shown that every rd-continuous function has an antiderivative and its Riemann and Lebesgue integrals agree with the delta integral defined above.

\begin{ex} \label{e10}
If $\mathbb{T}=\mathbb{R}$, then $\int\limits_a^b f(\tau) \Delta \tau=\int\limits_a^b f(\tau)d\tau$, where the integral on the right is the usual Riemann integral. If $\mathbb{T}=h\mathbb{Z}$, $h>0$, then $\int\limits_a^b f(\tau)\Delta\tau=\sum\limits_{t=\frac{a}{h}}^{\frac{b}{h}-1}f(th)h$ for $a<b$.
\end{ex}

\end{document}